 \author{Julius Fergy T. Rabago}
\title{Forbidden Set of the Rational Difference Equation $x_{n+1} = x_n x_{n-k}/(ax_{n-k+1} +x_n x_{n-k+1} x_{n-k})$}
\begin{document}

\maketitle

\section{Introduction}

Recently, various types of difference equations have been considered and examined (see, e.g., \cite{bc} and the papers cited therein).
These types of equations are of great importance in various fields of mathematics and areas of pure and applied sciences. 
In fact, they frequently appear as discrete mathematical models of many biological and environmental phenomena, such as population growth and predator-prey interactions \cite{ladas, kulenovic, kulenovic2}. 
They are also extensively used in deterministic formulations of dynamical phenomena in economics and social sciences \cite{sedaghat}.
One of the reasons why these equations are being studied is because they posses rich and complex dynamics (see, e.g., \cite{kulenovic}).
Some researchers, however, focus on the problem of finding closed form solutions of some solvable systems of nonlinear difference equations.
As a matter of fact, this line of research has become a growing interest in recent literature (see, e.g., \cite{elsayed, elsayed1, bacani2, rabago1, rabago2, rabago3, tollu, touafek, yazlik}, as well as the references therein).
In this work, we are also interested in finding a closed form solution of a certain class of difference equations, but, only as a way to solve a related problem.
To be more precise, we are interested in addressing the solution to one of the open problems posted by Balibrea and Cascales in \cite[Open Problem 3, Eq. 17]{bc} concerning the \emph{forbidden set} of a certain class of rational difference equations. 
Specifically, given fixed constants $k\in \mathbb{N}$ and $a>0$, we would like to find the forbidden set of the rational difference equation
\begin{equation}
\label{problem}
x_{n+1} = \dfrac{x_n x_{n-k}}{ax_{n-k+1} +x_n x_{n-k+1} x_{n-k}},
\end{equation}
with real initial conditions $\{x_n\}_{n=-k}^0$.
We shall determine the forbidden set of equation \eqref{problem} by first providing its closed form solution.

\begin{defn}
Given a rational difference equation $x_{n+1} = \frac{P(x_n,\ldots,x_{n-k})}{Q(x_n,\ldots,x_{n-k}}$ of order $k + 1$, 
where $P$ and $Q$ are two polynomials, there exists a subset $\mathcal{F}$ of $\mathbb{R}^{k+1}$ such that every initial condition of \eqref{problem} lying on 
$\mathcal{F}$ generates a finite solution $x_{-k},\ldots,x_m$ for which is impossible to construct $x_{m+1}$ because its $k + 1$ latest terms form a root of polynomial $Q$.
The set $\mathcal{F}$ is known as the \emph{forbidden set} of the equation.
\end{defn}

Consequently, the forbidden set $\mathcal{F}$ of a rational difference equation is the set of initial conditions which eventually map to a singularity,
or more intuitively, $\mathcal{F}$ is the set of initial conditions for which after a finite number of iterates we reach a value outside the domain of definition of the iteration function \cite{bc}. For some papers related to this topic, see \cite{bc} and \cite{palladino}, and the references cited therein.

Our main result, which answers the open problem \cite[Open Problem 3, Eq. 17]{bc}, is stated in Corollary \ref{cor1}. 
The term solution and iterate shall be used interchangeably throughout the rest of the paper. 
\section{Closed Form Solution and Forbidden Set of Equation \eqref{problem}}\label{sec2}

In this section, we derive the closed form solution of \eqref{problem}, and then deduce from the computed formula the forbidden set of the given equation.
To begin with, we provide some preliminary observations regarding the right side of equation \eqref{problem}. 
First, notice that the equation can be written as
\[
x_{n+1} = \dfrac{x_n x_{n-k}}{x_{n-k+1}(a + x_n x_{n-k})}.
\]
Clearly, this form suggests that the quantities $x_n x_{n-k} + a$ and $x_{n-k}$ should not be both zero, for all $n\in\mathbb{N}_0$, 
so that the sequence of iterates $\{x_n\}_{n=1}^{\infty}$ is well-define.
Hence, we assumed that $x_n x_{n-k} \neq a$ and $x_{n-k} \neq 0$, for all $n\in\mathbb{N}_0$.
These conditions shall be refined later on in the discussion by expressing them in terms of just the initial conditions $\{x_n\}_{n=-k}^0$ and the parameter $a$.
Meanwhile, if $x_n x_{n-k} = 1 - a$, for all $n\in \mathbb{N}_0$, and $a \neq 1$, then equation \eqref{problem} reduces to 
\begin{equation}
\label{simple}
x_n = \frac{1-a}{x_{n-k}}, \quad n \in \mathbb{N}_0.
\end{equation}
This implies that the solution sequence $\{x_n\}_{n=1}^{\infty}$ to \eqref{problem} is \emph{periodic} (see Definition \ref{periodicity}).
Indeed, substituting $x_{n-k} = (1-a)/x_{n-2k}$ in equation \eqref{simple} yields the equation (after an adjustment in the index) $x_{n+2k} = x_n$ ($n \in \mathbb{N}_0$).
Clearly, this equation shows that the sequence of iterates $\{x_n\}_{n=1}^{\infty}$ is periodic with period $2k$.

Now, in the sequel, we shall assume that $x_n x_{n-k} \neq 1 - a$ for all $n\in \mathbb{N}_0$ and $a\neq 1$.
Consider the transformation 
\begin{equation}
\label{problem1}
v_{n+1} = (a+1) v_n - av_{n-1}, \qquad n \in \mathbb{N}_0,
\end{equation}
of equation \eqref{problem} obtained through the change of variable $v_{n+1}/ v_n := a + x_{n+1} x_{n-k+1}$.
We determine the solution form of equation \eqref{problem1} through a classical method in solving linear (homogenous) recurrences. 
That is, we use a discrete function $\lambda^n$ where $\lambda \in \mathbb{C}\setminus \{0\}$ and $n \in \mathbb{N}_0\cup \{-1\}$ to a obtain a Binet-like form of the $n$-th term $v_n$.
The inclusion of the index $-1$ is crucial (as we are using the ansatz $v_n = \lambda^n$) in this approach, 
and this we shall see as we proceed in our discussion.
It is worth noting that there are many other techniques in solving linear recurrences with constant coefficients (see, e.g., \cite{ams} and \cite{larcombe}), 
and here we shall apply the method of using a discrete function as our main approach.
However, we shall also remark that the method of differences, much known as \emph{telescoping sums}, 
can be effectively used to derive the solution form of equation \eqref{problem}.
For an interesting application of this method to a class of difference equations, we refer the readers to \cite{rabago4}.

Now, to begin the computation, we let $v_n = \lambda^n$ for some $\lambda \in \mathbb{C} \setminus \{0\}$ and $n \in \mathbb{N}_0 \cup \{-1\}$.
From equation \eqref{problem1}, we get
$
\lambda^{n+1} = (a+1)\lambda^n - a \lambda^{n-1}
$
or equivalently,
$
\lambda^2 - (a+1)\lambda + a = 0
$
whose roots are given by $\lambda_1 = a$  and $\lambda_2 = 1$.
Since $a \neq 1$, then it is evident that $\lambda_1$ and $\lambda_2$ are distinct. 
Therefore, by a standard result in difference equations, we can write $v_n$ as 
$
v_n = c_1 a^n + c_2 1^n
$
for some computable constants $c_1$ and $c_2$.
These coefficients are easily determined by computing the solution pair $(c_1,c_2)$  of the system
\[
\begin{cases}
c_1 + c_2 = v_0\\
c_1 + ac_2 = av_{-1}.
\end{cases}
\]
Thus, 
\[
c_1 = \frac{av_0 - av_{-1}}{a-1}\qquad \text{and}\qquad c_2 = -\frac{v_0 - av_{-1}}{a-1},
\]
from which it follows that
\[
v_n = \left( \frac{a^{n+1} - 1}{a-1} \right)v_0 - a\left( \frac{a^n -1}{a-1}\right) v_{-1}.
\]
Form the relation $x_{n} x_{n-k} = v_n/v_{n-1} - a$, we obtain
\begin{align}
	x_{n} x_{n-k} 
	&=  \frac{ \left( \frac{a^{n+1} - 1}{a-1} \right)\frac{v_0}{v_{-1}} - a\left( \frac{a^n -1}{a-1}\right) }{ \left( \frac{a^{n} - 1}{a-1} \right)\frac{v_0}{v_{-1}} - a\left( \frac{a^{n-1} -1}{a-1}\right)} -a\nonumber\\
	&= \frac{ \left( \frac{a^{n+1} - 1}{a-1} \right)(a+x_0 x_{-k}) - a\left( \frac{a^n -1}{a-1}\right)}{ \left( \frac{a^{n} - 1}{a-1} \right)(a+x_0 x_{-k}) - a\left( \frac{a^{n-1} -1}{a-1}\right)} -a \nonumber\\
	&= \frac{ x_0 x_{-k} }{ a^{n} + \left(\frac{a^{n} - 1}{a-1} \right)x_0 x_{-k}}.\label{xxform}
\end{align}
Now, replacing $n$ by $2kj+i$ (resp., $2kj-k+i$) for $i \in I:= \{ -k, -k+1, \ldots, 0\}$, we get
\begin{align*}
	x_{2kj+i} x_{2kj-k+i} &= \frac{ x_0 x_{-k} }{ a^{2kj+i} + \left(\frac{a^{2kj+i} - 1}{a-1} \right)x_0 x_{-k}},\\
	x_{2kj-k+i} x_{2kj-2k+i} &= \frac{ x_0 x_{-k} }{ a^{2kj-k+i} + \left(\frac{a^{2kj-k+i} - 1}{a-1} \right)x_0 x_{-k}},
\end{align*}
respectively.
Taking the ratio of the corresponding sides of the above equations, and then taking the product of the resulting expression from $j=1$ to $j=n$, we get
\begin{align}
	x_{2kn+i} &= x_{i}\prod_{j=1}^n \left\{ \frac{x_{2kj+i} x_{2kj-k+i} }{x_{2kj-k+i} x_{2kj-2k+i} }\right\} \nonumber\\
	&= x_i \prod_{j=1}^n \left\{ \frac{ a^{2kj-k+i} + \left(\frac{a^{2kj-k+i} - 1}{a-1} \right)x_0 x_{-k} }{ a^{2kj+i} + \left(\frac{a^{2kj+i} - 1}{a-1} \right)x_0 x_{-k}} \right\},
	\label{exform}
\end{align}
for all $n\in \mathbb{N}_0$ and $i \in I$, with the usual convention that $\prod_{j=1}^0 (\cdot) = 1$.
Notice that, with the above indices of the iterate, we were not able to describe the form of the first $k-1$ iterates $\{x_n\}_{n=1}^{k-1}$.
However, these iterates can be obtained easily by replacing $i$ by $-i$ in \eqref{exform} and let $i$ run from $1$ to $k-1$.
Alternatively, we can utilize equation \eqref{xxform} and let $n$ assumes the value from $1$ to $k-1$.
More precisely, we have
\[
	x_n = \frac{ x_0 x_{-k} }{x_{n-k}} \left( a^{n} + \left(\frac{a^{n} - 1}{a-1} \right)x_0 x_{-k}\right)^{-1},
\] 
for all $n = 1, 2, \ldots, k-1$.
\begin{rem}
\label{firstremark}
We note that we can determine the solution form of equation \eqref{problem1} via telescoping sums.
To do this, we transform equation \eqref{problem1} to the equivalent form $v_{n+1} - v_n =a(v_n - v_{n-1})$.
Letting $w_{n+1}: = v_{n+1} - v_n$, we can write equation \eqref{problem1} as $w_{n+1} = a w_{n}$ which, upon iterating the right-hand side, leads to $w_{n+1} = a^{n+1} w_0$.
This equation, in turn, yields the relation $v_{n+1} - v_n = a^{n+1} (v_0 - v_{-1})$,
and by telescoping sums we easily obtain the identity
\begin{align*}
v_n = v_{-1} +  (v_0 - v_{-1})\sum_{j=-1}^{n-1} a^{j+1}
	= \left( \frac{a^{n+1} - 1}{a-1} \right)v_0 - a\left( \frac{a^n -1}{a-1}\right) v_{-1}.
\end{align*}
To this end, one can follow the same inductive lines as above to get the desired result.
Referring to the form of $v_n$ computed above, it is clear that $a$ must not equate to unity since, if it is so, the quantity will be undefined.
\end{rem}
In concluding, we have just proved the following result.
\begin{thm}
\label{maintheorem}
	Let $k\in \mathbb{N}$ and $a>1$ ($a\neq1$) be fixed.
	Then, every well-defined solution $\{x_n\}_{n=-k}^{\infty}$ of equation \eqref{problem} takes the form
	\begin{equation}
	x_n = \frac{ x_0 x_{-k} }{x_{n-k}} \left[ a^{n} + \left(\frac{a^{n} - 1}{a-1} \right)x_0 x_{-k}\right]^{-1},
	\label{thmform1}
	\end{equation}
	for all $n = 1, 2, \ldots, k-1$, and 
	\begin{equation}
	x_{2kn+i} = x_i \prod_{j=1}^n \left\{ \frac{ a^{2kj-k+i} + \left(\frac{a^{2kj-k+i} - 1}{a-1} \right)x_0 x_{-k} }{ a^{2kj+i} + \left(\frac{a^{2kj+i} - 1}{a-1} \right)x_0 x_{-k}} \right\},
	\label{thmform2}
	\end{equation}
	for all $n\in \mathbb{N}_0$ and $i \in I$.
	If, in addition, $x_0 x_{-k} = 1 -a$, $a\neq 1$, then the solution forms \eqref{thmform1} and \eqref{thmform2} can be simplified as
	\[
	x_n = \frac{ x_0 x_{-k} }{x_{n-k}} ,	
	\]
	for all $n = 1, 2, \ldots, k-1$, and 
	\[
	x_{2kn+i} = x_i,
	\]
	for all $n\in \mathbb{N}_0$ and $i \in I$, respectively.

	By a well-defined solution of \eqref{problem}, we mean a solution sequence $\{x_n\}_{n=-k}^{\infty}$ with real initial conditions $\{x_n\}_{n=-k}^0$ 
	such that $x_i \neq 0$ for all $i \in I$, and
	\[
	x_0 x_{-k} \not\in A\cup B:=
	\left\{- a^j \left(\frac{a^j - 1}{a-1} \right)^{-1} \right\}_{j=1}^{k-1} \bigcup \left\{- a^{2kj+i} \left(\frac{a^{2kj+i} - 1}{a-1} \right)^{-1} \right\}_{j=1}^{\infty},
	\] 
	for all $i \in I$.
\end{thm}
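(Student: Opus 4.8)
The bulk of the computation has in fact been carried out in the discussion preceding the statement, so the plan is to organize it into a clean induction. First I would make the change of variable rigorous: given a solution $\{x_n\}_{n=-k}^{\infty}$ of \eqref{problem} with $x_i\neq 0$ for all $i\in I$, set $v_{-1}:=1$ and $v_n:=v_{n-1}\,(a+x_nx_{n-k})$ for $n\geq 0$, so that $v_n/v_{n-1}=a+x_nx_{n-k}$. Rewriting \eqref{problem} as $x_{n+1}x_{n-k+1}=x_nx_{n-k}/(a+x_nx_{n-k})$ and substituting, one checks that $v_{n+1}/v_n=(a+1)-a\,v_{n-1}/v_n$, i.e.\ that $\{v_n\}$ satisfies the linear recurrence \eqref{problem1}; the only hypothesis used here is that no $v_n$ vanishes, which I postpone. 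Solving \eqref{problem1} as in the text, with characteristic roots $\lambda_1=a$ and $\lambda_2=1$ (distinct because $a\neq 1$) and data $v_{-1}=1$, $v_0=a+x_0x_{-k}$, and simplifying, gives $v_n=a^{n+1}+\left(\frac{a^{n+1}-1}{a-1}\right)x_0x_{-k}$ (equivalently the expression for $v_n$ obtained above; cf.\ also Remark \ref{firstremark}); the cases $n=-1,0$ I would check directly.

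Next, from $x_nx_{n-k}=v_n/v_{n-1}-a$ together with the formula for $v_n$, a short simplification yields \eqref{xxform}, valid for every $n\geq 0$ with $v_{n-1}\neq 0$. For $n=1,\dots,k-1$ I solve \eqref{xxform} for $x_n$ — legitimate since $x_{n-k}\neq 0$ as $n-k\in I$ — to obtain \eqref{thmform1}. For the remaining indices I use the telescoping identity $x_{2kn+i}=x_i\prod_{j=1}^{n}\dfrac{x_{2kj+i}\,x_{2kj-k+i}}{x_{2kj-k+i}\,x_{2kj-2k+i}}$, in which the cancellation collapses the product to $x_{2kn+i}/x_i$, and substitute \eqref{xxform} (with $n$ replaced by $2kj+i$ and by $2kj-k+i$) into the numerator and denominator of each factor; the factors $x_0x_{-k}$ cancel and \eqref{thmform2} results. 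A formal induction on $n$, using the empty-product convention for $n=0$, makes both derivations precise.

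For the forbidden set I would argue that the recursion generates the entire sequence $\{x_n\}_{n\geq 1}$ precisely when $x_i\neq 0$ for all $i\in I$ and no $v_n$ vanishes: indeed a breakdown of \eqref{problem} means $x_{n-k+1}=0$ or $a+x_nx_{n-k}=0$ for some $n$, and the latter is equivalent (via $v_n/v_{n-1}=a+x_nx_{n-k}$ and $v_{-1}=1$) to some $v_n=0$, while the former is ruled out by $x_i\neq 0$ for $i\in I$ together with \eqref{xxform}. Since $v_n=a^{n+1}+\left(\frac{a^{n+1}-1}{a-1}\right)x_0x_{-k}$, the condition ``no $v_n$ vanishes'' says exactly that $x_0x_{-k}$ avoids the numbers $-a^{p}\left(\frac{a^{p}-1}{a-1}\right)^{-1}$ for the exponents $p$ occurring as denominators in \eqref{thmform1}–\eqref{thmform2}, namely $p\in\{1,\dots,k-1\}$ and $p=2kj+i$ with $j\geq 1$, $i\in I$; that is, $x_0x_{-k}\notin A\cup B$. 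Finally, in the degenerate case $x_0x_{-k}=1-a$ one gets $v_n=a^{n+1}+\left(\frac{a^{n+1}-1}{a-1}\right)(1-a)=1$ for all $n$, so every denominator in \eqref{thmform1}–\eqref{thmform2} equals $1$; the stated simplifications follow at once, and $x_{2kn+i}=x_i$ exhibits the period-$2k$ behaviour already anticipated via \eqref{simple}.

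The main obstacle I expect is not the algebra — the reduction to \eqref{problem1}, the Binet formula, and the telescoping are all routine — but two bookkeeping points. The first is keeping the index $-1$ in play throughout the reduction (as flagged after \eqref{problem1}), so that both $v_{-1}$ and $v_0$, hence the two-term initial data for \eqref{problem1}, are genuinely recovered from the original initial conditions $\{x_n\}_{n=-k}^{0}$. The second, and more delicate, is matching the single requirement ``no $v_n$ vanishes'' against the residue-class indexing $n=2kj+i$, $i\in I$, used to display the solution, and making sure the first $k-1$ iterates — handled by the separate formula \eqref{thmform1}, hence the separate set $A$ — are correctly absorbed; this is the step on which the description of $\mathcal{F}$, and therefore Corollary \ref{cor1}, ultimately rests.
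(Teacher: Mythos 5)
Your proposal follows the paper's proof essentially verbatim: the substitution $v_n/v_{n-1}=a+x_nx_{n-k}$ reducing \eqref{problem} to the linear recurrence \eqref{problem1}, the Binet-type solution with roots $a$ and $1$, the identity \eqref{xxform}, and the telescoping product over the residue classes $2kj+i$ are exactly the paper's steps, your normalization $v_{-1}=1$ being only a cosmetic simplification of the paper's use of the ratio $v_0/v_{-1}=a+x_0x_{-k}$. The one ``delicate point'' you flag --- reconciling the condition that no $v_n$ vanishes with the exponent set $\{1,\dots,k-1\}\cup\{2kj+i:j\ge1,\ i\in I\}$, which for $k\ge 2$ does not exhaust all positive integers --- is real, but the paper glosses over it in precisely the same way, so your treatment is faithful to the published argument.
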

As an immediate consequence of Theorem \ref{maintheorem}, we finally obtain the forbidden set for the difference equation \eqref{problem} given in the following corollary.
\begin{cor}
\label{cor1}
Let $\boldsymbol{x}_0 := (x_{-k}, x_{k+1}, \ldots, x_0) \in \mathbb{R}^{k+1}$, $k \in \mathbb{N}$, and $a>1$ ($a\neq 1$) be fixed. 
Then, the forbidden set $\mathcal{F}$ of the difference equation \eqref{problem} is given by
\[
\mathcal{F} = \left\{\boldsymbol{x}_0\ : \ x_i \neq 0 \ \text{for all $i \in I$, and}\   x_0 x_{-k} \not\in A \cup B \right\}.
\]
\end{cor}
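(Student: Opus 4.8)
The plan is to read Corollary~\ref{cor1} off directly from Theorem~\ref{maintheorem} together with the Definition of the forbidden set from the Introduction. By that Definition, the complement $\mathbb{R}^{k+1}\setminus\mathcal{F}$ consists exactly of those initial vectors $\boldsymbol{x}_0$ whose orbit under \eqref{problem} can be continued to every index $n$, i.e.\ for which the denominator $x_{n-k+1}(a+x_nx_{n-k})$ never vanishes along the iteration. So what I would really prove is the equivalence: $\boldsymbol{x}_0$ generates such a well-defined solution if and only if $x_i\neq 0$ for all $i\in I$ and $x_0x_{-k}\notin A\cup B$. This is precisely the notion of ``well-defined solution'' isolated in the last paragraph of Theorem~\ref{maintheorem}, and the claimed description of the forbidden set (as the set of initial data violating these conditions) follows at once.

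For the forward implication I would assume $x_i\neq 0$ for all $i\in I$ and $x_0x_{-k}\notin A\cup B$, so in particular $x_0x_{-k}\neq 0$. Then every quantity in the closed forms \eqref{thmform1}--\eqref{thmform2} is a well-defined nonzero real: the denominators $a^n+\left(\frac{a^n-1}{a-1}\right)x_0x_{-k}$ and $a^{2kj+i}+\left(\frac{a^{2kj+i}-1}{a-1}\right)x_0x_{-k}$ are nonzero exactly because $x_0x_{-k}$ avoids $A$ and $B$ (here $a>1$ forces $a^m\neq 1$ for $m\geq 1$, so each value $-a^m\left(\frac{a^m-1}{a-1}\right)^{-1}$ is well defined), while the factors $x_{n-k}$ in \eqref{thmform1} and the seeds $x_i$ in \eqref{thmform2} are nonzero by hypothesis and a short induction through the formulas themselves; hence every $x_n$ exists and is nonzero. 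Moreover, from \eqref{xxform} one computes
\[
a+x_nx_{n-k}=\frac{a^{n+1}+\left(\frac{a^{n+1}-1}{a-1}\right)x_0x_{-k}}{a^{n}+\left(\frac{a^{n}-1}{a-1}\right)x_0x_{-k}},
\]
whose numerator has the same shape as the denominators governed by $A\cup B$, so it too is nonzero and $a+x_nx_{n-k}\neq 0$. Thus the recursion never meets a singularity and $\boldsymbol{x}_0\notin\mathcal{F}$.

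For the converse I would suppose one of the conditions fails. If $x_0x_{-k}\in A\cup B$, then it annihilates one of the denominators in \eqref{thmform1} or \eqref{thmform2}; taking the smallest index at which this happens pinpoints the first iterate that cannot be constructed, so the solution is finite and $\boldsymbol{x}_0\in\mathcal{F}$. If instead $x_i=0$ for some $i\in I$, then either this zero sits directly in the denominator $x_{n-k+1}$ of \eqref{problem} at the step $n=i+k-1$ (when the preceding iterates exist), or some earlier iterate is already undefined, or — when the direct mechanism does not apply, as for $i=-k$ — the vanishing of $x_0x_{-k}$ forces, via \eqref{xxform}, a later iterate to equal $0$ and hence a subsequent denominator to vanish; in every case the orbit terminates after finitely many steps and $\boldsymbol{x}_0\in\mathcal{F}$. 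The borderline value $x_0x_{-k}=1-a$ ($a\neq 1$) is taken care of by the second part of Theorem~\ref{maintheorem}: the solution is then genuinely periodic, hence well-defined, and indeed $1-a\notin A\cup B$, so nothing extra is needed. The step I expect to demand the most care is the index bookkeeping in this converse direction: one must check that the families defining $A$ (over $j=1,\dots,k-1$) and $B$ (over $j\geq 1$, $i\in I$), together with the auxiliary formula for $x_1,\dots,x_{k-1}$, account for every singular configuration the recursion can actually reach and introduce no spurious exclusions — that is, that the substitutions $n\mapsto 2kj+i$ and $n\mapsto 2kj-k+i$ sweep out the relevant range of indices and that the resulting denominators match the sets $A$ and $B$ exactly.
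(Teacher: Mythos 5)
Your overall strategy --- reading the corollary off from Theorem \ref{maintheorem} by proving the equivalence ``the orbit of $\boldsymbol{x}_0$ under \eqref{problem} is defined for all $n$ iff $x_i\neq 0$ for all $i\in I$ and $x_0x_{-k}\notin A\cup B$'' --- is exactly the paper's route (its entire proof is the phrase ``as an immediate consequence of Theorem \ref{maintheorem}''), and your handling of the vanishing initial values, of the special case $i=-k$, and of the borderline value $x_0x_{-k}=1-a$ is fine. But the one step you yourself flag as delicate, the index bookkeeping, is precisely where the argument breaks, and it cannot be repaired as stated: $A\cup B$ does \emph{not} account for every singular configuration when $k\geq 2$. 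Write $D_m:=a^m+\bigl(\frac{a^m-1}{a-1}\bigr)x_0x_{-k}$. Your own identity $a+x_nx_{n-k}=D_{n+1}/D_n$ shows that (once no $x_i$ vanishes, so that no iterate is ever zero) the orbit terminates exactly at the smallest $m\geq 1$ with $D_m=0$; hence the correct exclusion is $x_0x_{-k}\neq -a^m\bigl(\frac{a^m-1}{a-1}\bigr)^{-1}$ for \emph{every} $m\geq 1$, and these conditions are mutually exclusive, so the first such $m$ really is reached. Now $A$ covers the exponents $m\in\{1,\dots,k-1\}$ and $B$ covers $m=2kj+i$ with $j\geq 1$, $i\in I$, i.e.\ $m\in\{k,\dots,2k\}\cup\{3k,\dots,4k\}\cup\cdots$; the exponents $m\in\{2k+1,\dots,3k-1\}\cup\{4k+1,\dots,5k-1\}\cup\cdots$ are missed. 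These are exactly the $D_m$ that occur only as \emph{numerators} in \eqref{thmform2}: if such a $D_m$ vanishes, the closed form merely outputs $0$ and remains formally defined, but the actual recursion dies at step $m$, since $a+x_{m-1}x_{m-1-k}=D_m/D_{m-1}=0$ while the numerator $x_{m-1}x_{m-1-k}=-a\neq 0$. Concretely, for $k=2$ the value $x_0x_{-2}=-a^5(a-1)/(a^5-1)$ lies outside $A\cup B$ yet makes $x_5$ impossible to construct. So your sentence ``whose numerator has the same shape as the denominators governed by $A\cup B$, so it too is nonzero'' is false as written; the defect is already present in the statements of Theorem \ref{maintheorem} and Corollary \ref{cor1}, and your proposal inherits it rather than proving it.

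Two smaller points. First, as literally printed, Corollary \ref{cor1} declares $\mathcal{F}$ to be the set of initial data \emph{satisfying} the well-definedness conditions, which by the paper's own definition is the complement of the forbidden set; you silently read it the other way round (``the set of initial data violating these conditions''), which is the only sensible reading, but the reversal should be stated explicitly rather than absorbed. Second, in the converse direction the passage from ``a denominator of the closed form vanishes'' to ``the recursion reaches a genuine singularity after finitely many steps'' is again the identity $a+x_nx_{n-k}=D_{n+1}/D_n$: what matters is the \emph{first} $m$ with $D_m=0$, not merely some index at which a displayed denominator of \eqref{thmform1} or \eqref{thmform2} vanishes, and spelling that out is what makes your ``smallest index'' argument legitimate --- and simultaneously exposes the gap described above.
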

\begin{rem}
	We observe that the computation of the closed form solution of equation \eqref{problem} does not require the positivity of $a$.
	This suggests that, following the same line of arguments, the results can easily be extended to the case when $a$ is an arbitrary real number not equal to zero,
	or possibly when $a \in \mathbb{C}\setminus \{0\}$ in general. 
\end{rem}


\section{Some Results on the Behavior of Solutions of Equation \eqref{problem}}

In this section we examine the case when $a$ is the unity, and present some results regarding the qualitative behavior of the solution of equation \eqref{problem}.
Also, we provide some numerical illustrations depicting the long-time behavior of solutions of equation \eqref{problem} for some given fixed constants $k \in \mathbb{N}$ and $a>0$.

Before we proceed further, we need to recall what we mean by an \emph{eventually periodic solution}.

\begin{defn}[{\cite{ladas}}]
\label{periodicity}
Let $k \in \mathbb{N}$. A sequence $\{x_n\}_{n=-k}$ is said to be periodic with period $p$ if $x_{n+p} =x_n$, for all $n\geq-k$.
Moreover, a solution $\{x_n\}_{n=-k}$ of \eqref{problem} is called eventually periodic with period $p$ if there exists an integer $N \geq -k$ such that  $\{x_n\}_{n=-k}$ is periodic with period $p$; that is, $x_{n+p} = x_n$, for all $n \geq N$.
\end{defn}

Hereinafter, we assume that $\{x_n\}_{n=-k}^{\infty}$ is a well-defined solution of \eqref{problem}.
 
\subsection{Form and Periodicity of Solutions for the Case $a=1$}

For the case when $a = 1$, we have the following corollary of Theorem \ref{maintheorem}.
\begin{cor}
\label{cor2}
If $a =1$, then every solution $\{x_n\}_{n=-k}^{\infty}$ of equation \eqref{problem} takes the form
	\begin{equation}
	x_n = \frac{ x_0 x_{-k} }{x_{n-k}} \left(1 + nx_0 x_{-k}\right)^{-1},
	\label{corform1}
	\end{equation}
	for all $n = 1, 2, \ldots, k-1$, and 
	\begin{equation}
	x_{2kn+i} = x_i \prod_{j=1}^n \left\{ \frac{ 1 + \left(2kj-k+i\right)x_0 x_{-k} }{ 1 + \left({2kj+i} \right)x_0 x_{-k}} \right\},
	\label{corform2}
	\end{equation}
	for all $n\in \mathbb{N}_0$ and $i \in I$.
	Furthermore, the solution sequence $\{x_n\}_{n=-k}^{\infty}$ is eventually periodic with period $2k$. 
	In addition, the forbidden set $\mathcal{F}_1$ of equation \eqref{problem} for $a=1$ is given by
	\[
	\mathcal{F}_1 = 
	\left\{\boldsymbol{x}_0\ : \ x_i \neq 0 \ \text{for all $i \in I$, and}\   x_0 x_{-k} \not\in \left\{ -\frac1j\right\}_{j=1}^{k-1} \bigcup \left\{ -\frac{1}{2kj+1}\right\}_{j=1}^{\infty} \right\},
	\]
	where $\boldsymbol{x}_0 := (x_{-k}, x_{k+1}, \ldots, x_0) \in \mathbb{R}^{k+1}$.
\end{cor}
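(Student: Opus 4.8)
The plan is to treat $a=1$ as the confluent (repeated-root) case of the computation in Section \ref{sec2} and to rerun that machinery directly, rather than merely quoting a limit. First I would redo the linear step: with $a=1$ the auxiliary recurrence \eqref{problem1} becomes $v_{n+1}=2v_n-v_{n-1}$, whose characteristic equation $\lambda^2-2\lambda+1=0$ has the double root $\lambda=1$, so $v_n=c_1+c_2 n$. Fitting the data $v_0,v_{-1}$ gives $v_n=(n+1)v_0-nv_{-1}$, and the relation $x_nx_{n-k}=v_n/v_{n-1}-1$ then collapses to $x_nx_{n-k}=x_0x_{-k}\,(1+nx_0x_{-k})^{-1}$, the $a=1$ analogue of \eqref{xxform}. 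Substituting $n\mapsto 2kj+i$, taking ratios of corresponding sides, and forming the product from $j=1$ to $n$ reproduces the derivation of Section \ref{sec2} verbatim, yielding \eqref{corform1} for $n=1,\dots,k-1$ and \eqref{corform2} for the blocks. As a consistency check the same formulas fall out of \eqref{thmform1} and \eqref{thmform2} on letting $a\to1$, since $\tfrac{a^m-1}{a-1}=1+a+\cdots+a^{m-1}\to m$ and $a^m\to1$, the apparent singularity at $a=1$ being removable.

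For the forbidden set $\mathcal{F}_1$ I would impose that every denominator produced by the closed form be nonzero. From \eqref{corform1} this requires $1+nx_0x_{-k}\neq0$ for $n=1,\dots,k-1$, that is $x_0x_{-k}\neq-1/n$; from \eqref{corform2} it requires $1+(2kj+i)x_0x_{-k}\neq0$ for all $j\geq1$ and $i\in I$. Together with $x_i\neq0$ for $i\in I$, collecting the excluded values of $x_0x_{-k}$ produces the two families of reciprocals displayed in $\mathcal{F}_1$. Equivalently this is the image of $A\cup B$ under $a\to1$, since $-a^{m}\left(\tfrac{a^m-1}{a-1}\right)^{-1}\to-1/m$.

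The substantive step, and the one I expect to be the main obstacle, is the eventual-periodicity assertion. The natural plan is to read the period-$2k$ shift directly off \eqref{corform2},
\[
\frac{x_{2k(n+1)+i}}{x_{2kn+i}}=\frac{1+(2kn+k+i)\,x_0x_{-k}}{1+(2kn+2k+i)\,x_0x_{-k}},
\]
and to exhibit an integer $N$ beyond which $x_{n+2k}=x_n$. The whole difficulty thus concentrates in showing that this shift ratio equals $1$ for all large $n$, equivalently that the tail of the product in \eqref{corform2} contributes only a trivial factor. I would attempt this by tying the $a=1$ dynamics to the exact-period-$2k$ branch of Theorem \ref{maintheorem} (the case $x_0x_{-k}=1-a$, for which $x_{2kn+i}=x_i$) through the confluent limit $a\to1$, and by tracking the product in \eqref{corform2} as $n$ grows. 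Since that branch degenerates under $x_i\neq0$ in the limit, reconciling it with the generic $a=1$ data, pinning down the threshold $N$, and verifying the shift identity exactly rather than only its limiting behaviour is the crux and the part that will demand the most care.
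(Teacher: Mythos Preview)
For the closed forms and for $\mathcal F_1$ your plan coincides with the paper's proof: the paper simply obtains \eqref{corform1}--\eqref{corform2} by sending $a\to 1$ in \eqref{thmform1}--\eqref{thmform2} (using $(a^m-1)/(a-1)\to m$, $a^m\to 1$) and then reads the forbidden set off the denominators, exactly as you propose. Your direct repeated-root solution of $v_{n+1}=2v_n-v_{n-1}$ is a clean addition, but not a genuinely different route.

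Where you diverge from the paper is the periodicity claim, and your instinct there is correct. The paper's entire argument for ``eventually periodic with period $2k$'' is the single observation
\[
\frac{1+(2kj-k+i)\,x_0x_{-k}}{1+(2kj+i)\,x_0x_{-k}}\longrightarrow 1\qquad(j\to\infty),
\]
i.e.\ precisely the asymptotic statement you flagged as insufficient for eventual periodicity in the sense of Definition~\ref{periodicity}. Do not try to upgrade this to the exact identity $x_{n+2k}=x_n$ for all large $n$: the shift ratio you wrote down,
\[
\frac{x_{2k(n+1)+i}}{x_{2kn+i}}=\frac{1+(2kn+k+i)\,x_0x_{-k}}{1+(2kn+2k+i)\,x_0x_{-k}},
\]
equals $1$ only when $k\,x_0x_{-k}=0$, which is excluded. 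In fact the $j$-th factor in \eqref{corform2} is $1-\tfrac{1}{2j}+O(j^{-2})$, so the partial product behaves like $Cn^{-1/2}$ and $x_{2kn+i}\to 0$; the solution is at best \emph{asymptotically} $2k$-periodic, not eventually periodic. The obstacle you isolated is therefore a defect in the statement rather than a gap in your argument, and the paper's proof does not overcome it either.
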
 

\begin{proof}
Let $\{x_n\}_{n=-k}^{\infty}$ be a solution of \eqref{problem} with $a=1$, and $\mathcal{F}_1$ denotes its forbidden set.
Formulas \eqref{corform1} and \eqref{corform2} follow directly by taking the limit of equations \eqref{thmform1} and \eqref{thmform2}, respectively, 
as $a$ approaches the unity. 
Meanwhile the periodicity of $\{x_n\}_{n=-k}^{\infty}$ is a consequence of the fact that 
\[
 \frac{ 1 + \left(2kj-k+i\right)x_0 x_{-k} }{ 1 + \left({2kj+i} \right)x_0 x_{-k}} \longrightarrow 1
 \]
 as $j \to \infty$, and of course, as long as $\boldsymbol{x}_0 \not\in \mathcal{F}_1$.
 Finally, the forbidden set $\mathcal{F}_1$ is specified by finding the values of $x_0x_{-k}$ for which formulas \eqref{corform1} and \eqref{corform2} are undefined. 
\end{proof}


\subsection{Limiting Properties of Solutions for $a\in \mathbb{R}^+\setminus\{1\}$}
\label{sec3pt2}

Now, we examine the limiting properties of solutions of equation \eqref{problem} for $a>0$ not equal to the unity.
First, we investigate the possibility that a solution to \eqref{problem} is convergent to zero.
To see this possibility, it suffices to determine when is the subsequence $\{x_{2kn+i}\}_{n=1}^{\infty}$, for all $i \in I$, converges to zero.
This situation would only be possible when $|x_{2kn+i}| < |x_{2k(n-1)+i}|$, for all $n \geq 2$ and $i \in I$.
In view of equation \eqref{thmform2}, this condition is equivalent to
\[
(0<)\ \left| \frac{ a^{2kn-k+i} + \left(\frac{a^{2kn-k+i} - 1}{a-1} \right)x_0 x_{-k} }{ a^{2kn+i} + \left(\frac{a^{2kn+i} - 1}{a-1} \right)x_0 x_{-k}} \right| < 1,
\]
for all $n\in \mathbb{N}$ and $i \in I$.
Without-loss-of-generality, suppose that the numerator and the denominator are both positive.
Then, after some rearrangement, the above inequality condition can be expressed as
\[
	 0 
	 < a^{2kn-k+i}(a^k - 1)\left(  1 + \frac{x_0x_{-k}}{a-1}\right).
\]
Since this inequality must hold true for all $n\in \mathbb{N}$ and $i \in I$,
then $a$ must be greater than the unity and $x_0x_{-k} \neq 1-a$.
Given these conditions, we conclude that every solution $\{x_n\}_{n=-k}^{\infty}$ of equation \eqref{problem} will converge to zero for $a > 1$.

Similarly, we can show, without any difficulty, that every solution $\{x_n\}_{n=-k}^{\infty}$ of equation \eqref{problem} 
will eventually be periodic whenever $a \in (0, 1)$ or $x_0 x_{-k} = 1 - a$.
In either of these situations, the periodicity is given by $2k$.
Indeed, for $a \in (0,1)$, the quantity $a^{2kj-k+i}$ vanishes as $j$ goes to infinity, for all $i \in I$.
In addition, the ratio $(a^{2kj+i} - 1)/(a-1)$ will converge to $1/(a-1)$ as $j$ goes to infinity, for all $i \in I$.
These results imply that
\[
\frac{ a^{2kj-k+i} + \left(\frac{a^{2kj-k+i} - 1}{a-1} \right)x_0 x_{-k} }{ a^{2kj+i} + \left(\frac{a^{2kj+i} - 1}{a-1} \right)x_0 x_{-k}} \longrightarrow 1,
\]
as $j \to \infty$ (and of course, given that $\boldsymbol{x}_0 \not\in \mathcal{F}$).
Meanwhile, the case when $x_0 x_{-k} = 1 - a$ ($a\neq 1$), has already been discussed in Section \ref{sec2}, and we will not repeat it here.
In summary, we see that following result holds.
 
 \begin{thm}
 \label{limits}
 	Let $\{x_n\}_{n=-k}^{\infty}$ be a solution of equation \eqref{problem}.
	If $a>1$ and $x_0 x_{-k} \neq 1-a$, then $\{x_n\}_{n=-k}^{\infty}$ converges to zero.
	If, however, $a \in (0,1)$ or $x_0 x_{-k} = 1-a$, then $\{x_n\}_{n=-k}^{\infty}$ is eventually periodic with period $2k$. 
 \end{thm}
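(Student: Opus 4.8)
The plan is to deduce all three assertions from the product representation \eqref{thmform2}: for each $i\in I$ it yields $x_{2kn+i}=x_i\prod_{j=1}^{n}r_j(i)$, where
\[
r_j(i):=\frac{ a^{2kj-k+i} + \left(\frac{a^{2kj-k+i} - 1}{a-1} \right)x_0 x_{-k} }{ a^{2kj+i} + \left(\frac{a^{2kj+i} - 1}{a-1} \right)x_0 x_{-k}},
\]
and every factor $r_j(i)$ is finite and nonzero because $\boldsymbol{x}_0\notin\mathcal{F}$ (Theorem \ref{maintheorem}, Corollary \ref{cor1}). The residue classes $1,\dots,k-1$ modulo $2k$ are covered by companion subsequences of exactly the same shape, obtained by the device that produced \eqref{thmform1}, and altering the finitely many seed iterates $x_1,\dots,x_{k-1}$ affects neither a limit nor an eventual-periodicity property; hence it is enough to analyse one subsequence $\{x_{2kn+i}\}_{n\ge0}$.

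\emph{Case $a>1$, $x_0x_{-k}\neq1-a$.} I would factor $a^{2kj+i}$ out of the numerator and the denominator of $r_j(i)$, recasting it as
\[
r_j(i)=\frac{a^{-k}+\left(\frac{a^{-k}-a^{-(2kj+i)}}{a-1}\right)x_0x_{-k}}{1+\left(\frac{1-a^{-(2kj+i)}}{a-1}\right)x_0x_{-k}}.
\]
Since $a>1$ forces $a^{-(2kj+i)}\to0$, and since $a-1>0$ the hypothesis $x_0x_{-k}\neq1-a$ is exactly $1+\frac{x_0x_{-k}}{a-1}\neq0$, the common factor $1+\frac{x_0x_{-k}}{a-1}$ cancels in the limit and $r_j(i)\to a^{-k}\in(0,1)$. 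Fixing $\rho\in(a^{-k},1)$ and $J$ with $|r_j(i)|\le\rho$ for all $j\ge J$ then gives
\[
|x_{2kn+i}|\le|x_i|\left(\prod_{j=1}^{J-1}|r_j(i)|\right)\rho^{\,n-J+1}\longrightarrow 0\quad(n\to\infty),
\]
and since $i\in I$ was arbitrary, $x_n\to0$.

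If instead $x_0x_{-k}=1-a$ (for any $a\neq1$), nothing further is needed: Theorem \ref{maintheorem} already gives $x_{2kn+i}=x_i$ for all $n$ and $x_n=x_0x_{-k}/x_{n-k}$ for $1\le n\le k-1$, so $\{x_n\}$ is genuinely $2k$-periodic. Finally, for $a\in(0,1)$ — with $x_0x_{-k}\neq1-a$ by the previous case and $x_0x_{-k}\neq0$ since $\boldsymbol{x}_0\notin\mathcal{F}$ — one has $a^{2kj-k+i}\to0$ and $a^{2kj+i}\to0$, so the numerator and the denominator of $r_j(i)$ each tend to $-\frac{x_0x_{-k}}{a-1}\neq0$, and hence $r_j(i)\to1$.

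The main obstacle is in this last case: $r_j(i)\to1$ does not by itself make the partial products $\prod_{j=1}^{n}r_j(i)$ eventually constant, whereas Definition \ref{periodicity} demands exact equality $x_{n+2k}=x_n$ from some index onward. What the convergence does deliver, once one records the identity
\[
r_j(i)-1=\frac{a^{2kj+i}\,(a^{-k}-1)\left(1+\frac{x_0x_{-k}}{a-1}\right)}{D_j},
\]
in which $D_j$ denotes the denominator of $r_j(i)$ and satisfies $D_j\to-\frac{x_0x_{-k}}{a-1}\neq0$, is the geometric bound $|r_j(i)-1|\le C\,a^{2kj}$ for $j$ large, whence $\sum_{j\ge1}|r_j(i)-1|<\infty$; consequently each product $\prod_{j\ge1}r_j(i)$ converges to a finite nonzero limit, each subsequence $\{x_{2kn+i}\}_n$ converges, and $x_{n+2k}-x_n\to0$. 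In other words $\{x_n\}$ is \emph{asymptotically} $2k$-periodic, and to make the conclusion agree with Definition \ref{periodicity} verbatim one should either phrase it in this asymptotic sense or restrict the literal ``eventually periodic'' assertion inside $(0,1)$ to the degenerate family $x_0x_{-k}=1-a$. The single calculation that cannot be sidestepped is the estimate $|r_j(i)-1|\le C\,a^{2kj}$; the rest is the routine asymptotics displayed above.
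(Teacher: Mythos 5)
Your proposal is correct and, at its core, rests on the same object the paper analyzes: the single factor $r_j(i)$ appearing in the product representation \eqref{thmform2} and its limit as $j\to\infty$. But your execution differs from the paper's in two ways that are worth recording. For $a>1$, the paper only argues that $|x_{2kn+i}|<|x_{2k(n-1)+i}|$, i.e.\ that the subsequence is decreasing in absolute value, and from this alone concludes convergence to zero; that inference is incomplete, since a decreasing positive sequence need not tend to $0$. Your argument closes this gap: from $r_j(i)\to a^{-k}\in(0,1)$ you extract a uniform bound $|r_j(i)|\le\rho<1$ for $j\ge J$ and obtain the geometric estimate $|x_{2kn+i}|\le C\rho^{\,n}$, which is what the conclusion actually requires. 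For $a\in(0,1)$, you correctly identify the defect that the paper's own proof shares but does not acknowledge: showing $r_j(i)\to1$ yields only $x_{n+2k}-x_n\to0$ (asymptotic $2k$-periodicity, indeed convergence of each subsequence $\{x_{2kn+i}\}_n$ via $\sum_j|r_j(i)-1|<\infty$), not the exact equality $x_{n+2k}=x_n$ for all large $n$ demanded by Definition \ref{periodicity}. Since the factors $r_j(i)$ are generically different from $1$ for every $j$, the literal ``eventually periodic'' claim of Theorem \ref{limits} for $a\in(0,1)$ with $x_0x_{-k}\neq1-a$ is in fact false as stated, and your suggestion to restate it asymptotically (or to confine the exact periodicity claim to the case $x_0x_{-k}=1-a$, where Theorem \ref{maintheorem} gives $x_{2kn+i}=x_i$ outright) is the right repair. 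In short: your proof is sound where the theorem is sound, and it correctly isolates the one place where the theorem, and the paper's argument for it, overreach.
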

 
 \subsection{Numerical Examples}
 
 Finally, in this section, we provide some numerical examples that illustrate our results in the previous two sections.
 In these examples (see Figure \ref{figures}), the initial conditions $\{x_n\}_{n=-k}^0$ are chosen randomly on the interval $(-1,1)$.
 The results verify Corollary \ref{cor2} and Theorem \ref{limits}.
   
  \begin{figure}[h!]
  \centering
  	\scalebox{.5}{\includegraphics{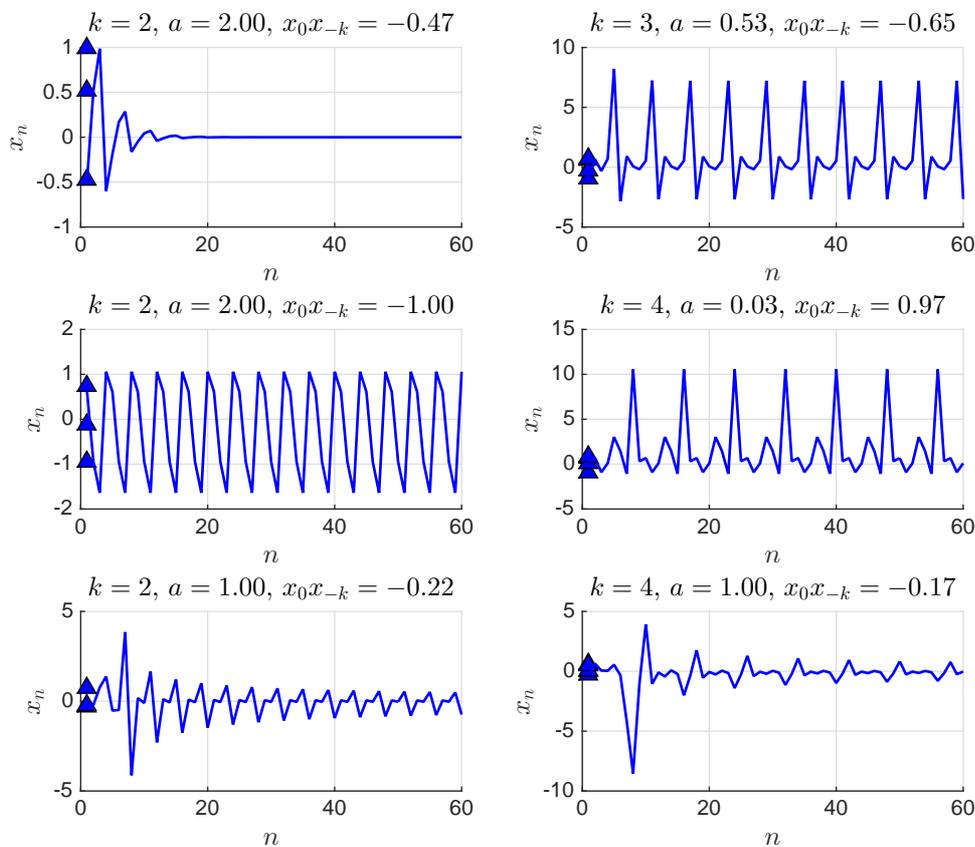}}
	\caption{The uppermost plots corroborate the results in Theorem \ref{limits} for the case $a>1$ (left plot) and the case $a\in(0,1)$ (right plot).
	Meanwhile, the middle plots illustrate the case when $a > 1$ and $x_0x_{-k} \neq 1$ (right plot), 
	and $a\in(0,1)$ (left plot) given that $x_0x_{-k} = 1-a$.
	Clearly, the solution sequences are both periodic with period four and eight, respectively.
	Finally, the last two (lower) plots illustrate two particular situations when $a=1$.
	Evidently, the figures show that, in these situations, the solutions to \eqref{problem} are eventually periodic.
	The solution sequences (left and right) have periods four and eight, respectively.   
	This verify the results stated in Corollary \ref{cor2}.
	}\label{figures}
  \end{figure}

\section{Summary and a Possible Extension}
We have successfully settled one of the open problem raised by Balibrea and Cascales in \cite{bc}.
The solution form to the given rational difference equation was established by reducing the equation to a linear type difference equation.
The resulting equation was then solved through a classical method in solving linear homogenous recurrence equation with constant coefficients.
We emphasize that the method used here can obviously be applied to other problems offered in \cite{bc}, 
especially to those nonlinear difference equations whose solution form are, in structure, similar to the ones obtained here.
In fact, we believe that the method employed here can be used effectively in examining the case when $a$ is replaced by a $2k$-periodic sequence of real or complex numbers. 
Consequently, we believe that the discussion delivered here provides a better understanding of the forbidden set problem in the frame of rational difference equations, and had provided considerable interest in examining other classes of nonlinear difference equations.

As a possible generalization of the open problem addressed in this work, we mention that the case when $a$ is replaced by a general number sequence is also an interesting problem to investigate.
So, we ask, given fixed constants $k\in \mathbb{N}$ and $a \in \mathbb{C}\setminus \{0\}$, and a general number sequence $\{a_n\}_{n=0}^{\infty}$, 
what is the corresponding forbidden set for the rational difference equation
\[
x_{n+1} = \dfrac{x_n x_{n-k}}{a_n x_{n-k+1} +x_n x_{n-k+1} x_{n-k}},
\]
with real (or complex) initial conditions $\{x_n\}_{n=-k}^0$.
Finally, we announce that the other open problems presented in \cite{bc} shall be the subject of our future investigations elsewhere.


\section*{Acknowledgment}

The author would like to express his gratitude to the anonymous referees whose valuable comments and suggestions helped him to improve the quality of the paper.

\end{document}